\newtheorem{theorem}{Theorem}
\newtheorem{pro}[theorem]{Problem}
\newtheorem{remark}[theorem]{Remark}
\newtheorem{cla}{Claim}
\begin{document}
\begin{frontmatter}

\title{Sufficient conditions for the existence of path-factors with given properties}
\author[address]{Hui Qin}
\ead{qinhui$_-$0213@163.com}

\author[address1]{Guowei Dai}
\ead{guoweidai1990@gmail.com}

\author[mymainaddress]{Yuan Chen\corref{mycorrespondingauthor}}
\cortext[mycorrespondingauthor]{Corresponding author}
\ead{chenyuanmath@hotmail.com}

\author[address1]{Ting Jin}
\ead{tingjin@njfu.edu.cn}

\author[address2]{Yuan Yuan}
\ead{yyuan@hainanu.edu.cn}

\address[address] {School of mathematics and big data, Anhui University of Science and Technology,
Huainan, 232001, P.R. China.}
\address[address1] {College of Science, Nanjing Forestry University, Nanjing, 210037, P.R. China.}
\address[mymainaddress]
{Research Center of Nonlinear Science, School of Mathematical and Physical Sciences, Wuhan Textile University, Wuhan, 430073, P.R. China. }
\address[address2] {School of Science, Hainan University, Haikou, 570228, P.R. China.}

\begin{abstract}
A spanning subgraph $H$ of a graph $G$ is called a $P_{\geq k}$-factor of $G$ if every component of $H$ is isomorphic to a path of order at least $k$, where $k\geq2$ is an integer.
A graph $G$ is called a $(P_{\geq k},l)$-factor critical graph if $G-V'$ contains a $P_{\geq k}$-factor for any $V'\subseteq V(G)$ with $|V'|=l$.
A graph $G$ is called a $(P_{\geq k},m)$-factor deleted graph if $G-E'$ has a $P_{\geq k}$-factor for any $E'\subseteq E(G)$ with $|E'|=m$.
Intuitively, if a graph is dense enough, it will have a $P_{\geq 3}$-factor.
In this paper, we give some sufficient conditions for a graph to be a $(P_{\geq 3},l)$-factor critical graph or a $(P_{\geq 3},m)$-factor deleted graph.
In this paper, we demonstrate that
(i) $G$ is a $(P_{\geq 3},l)$-factor critical graph if its sun toughness $s(G)>\frac{l+1}{3}$ and $\kappa(G)\geq l+2$.
(ii) $G$ is a $(P_{\geq 3},l)$-factor critical graph if its degree sum $\sigma_3(G)\geq n+2l$ and $\kappa(G)\geq l+1$.
(iii) $G$ is a $(P_{\geq 3},m)$-factor deleted graph if its sun toughness $s(G)\geq \frac{m+1}{m+2}$ and $\kappa(G)\geq 2m+1$.
(iv) $G$ is a $(P_{\geq 3},m)$-factor deleted graph if its degree sum $\sigma_3(G)\geq n+2m$ and $\kappa(G)\geq 2m+1$.
\end{abstract}

\begin{keyword}
path-factor, sun toughness, degree sum, $(P_{\geq3},l)$-factor critical graph, $(P_{\geq3},m)$-factor deleted graph.

\MSC[2020] 05C38\sep 05C70
\end{keyword}

\end{frontmatter}


\section{Introduction}
The underlying topology in parallel machines is a graph, in which processors are represented by vertices and links between processors are represented by edges. Such graphs are interconnection networks.
Matching parameters have important applications on measuring the reliability of interconnection networks; see \cite{ChengL, ChengL2}.
Graph factors are generalizations of matchings; see \cite{LDHKPK07} for some of their applications.
The path factor of a graph is a hot research topic in structural graph theory, and can be viewed as a generalization of the cardinality matching problem.
It is not only of profound theoretical significance, but also of extensive applied value in information science, management science, and other fields.
For example, certain file transfer problems and scheduling problems in networks can be converted into path-factor problems in graphs.

In the paper, we deal with only finite simple graph, unless explicitly stated.
We refer to \cite{Bondy1982} for the notation and terminologies not defined here.
Let $G = (V(G), E(G))$ be a simple graph, where $V(G)$ and $E(G)$ denote the vertex set and the edge set of $G$, respectively.
A subgraph $H$ of $G$ is called a spanning subgraph of $G$ if $V(H)=V(G)$ and $E(H)\subseteq E(G)$.
Given a vertex $v\in V(G)$, let $N_{G}(v)$ be the set of vertices adjacent to $v$ in $G$ and $d_{G}(v)=|N_{G}(v)|$ be the degree of $v$ in $G$.
The number of connected components of a graph $G$ is denoted by $\omega(G)$.

A subgraph $H$ of $G$ is called an induced subgraph of $G$ if every pair of vertices in $H$
which are adjacent in $G$ are also adjacent in $H$.
For any subset $S\subseteq V(G)$, let $G[S]$ denote the subgraph of $G$ induced by $S$,
and $G-S:=G[V(G)\setminus S]$ is the resulting graph after deleting the vertices of $S$ from $G$.
For any $M\subseteq E(G)$, we use $G-M$ to denote the subgraph obtained from $G$ by deleting $M$.
Especially, we write
$G-x=G-\{x\}$ for $S=\{x\}$ and $G-e=G-\{e\}$ for $M=\{e\}$.

For a family of connected graphs $\mathcal{F}$, a spanning subgraph $H$ of a graph $G$ is called an $\mathcal{F}$-factor of $G$ if each component of $H$ is isomorphic to some graph in $\mathcal{F}$.
A spanning subgraph $H$ of a graph $G$ is called a $P_{\geq k}$-factor of $G$ if every component of $H$ is isomorphic to a path of order at least $k$.
For example, a $P_{\geq 3}$-factor means a graph factor in which every component is a path of order at least three.
A graph $G$ is called a $(P_{\geq k},l)$-factor critical graph if $G-V'$ contains a $P_{\geq k}$-factor for any $V'\subseteq V(G)$ with $|V'|=l$.
A graph $G$ is called a $(P_{\geq k},m)$-factor deleted graph if $G-E'$ has a $P_{\geq k}$-factor for any $E'\subseteq E(G)$ with $|E'|=m$.
Especially, a $(P_{\geq k},m)$-factor deleted graph is simply called a $P_{\geq k}$-factor deleted graph if $m=1$.

\begin{figure}
\centerline{\includegraphics[width=10cm]{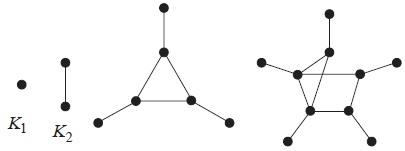}}
\caption{Suns}
\end{figure}

The concept of a sun was introduced by Kaneko \cite{Kaneko2003} as follows (e.g., see Figure 1).
A graph $H$ is called factor-critical if $H-v$ has a 1-factor for each $v\in V(H)$.
Let $H$ be a factor-critical graph and $V(H)=\{v_1,v_2,...,v_n\}$.
By adding new vertices $\{u_1,u_2,...,u_n\}$ together with new edges $\{v_{i}u_{i} : 1 \leq i\leq n\}$ to $H$, the resulting graph is called a sun.
Note that, according to Kaneko \cite{Kaneko2003}, we regard $K_1$ and $K_2$ also as a sun, respectively.
Usually, the suns other than $K_1$ are called big suns.
It is called a sun component of $G$ if the component of $G$ is isomorphic to a sun.
We denote by $sun(G)$ the number of sun components in $G$.

For a connected graph $G$, its \emph{sun~toughness}, denoted by $s(G)$, was defined as follows.
If $G$ is complete, then $s(G)=+\infty$; otherwise,
$$
s(G)=\min\left\{\frac{|X|}{sun(G-X)}: X\subseteq V(G), sun(G-X)\geq2\right\}.
$$

Since Tutte proposed the well-known Tutte 1-factor theorem \cite{Tutte1952}, there are many results on graph factors \cite{Bujtas2020,Dai2020,Kano2008,Lu2020,Lu2017}.
Some related problems about path-factor \cite{Dai3,Egawa2018,Y.Egawa2018,S.Zhou2019}
and path-factor covered graphs \cite{Dai,Dai2,Dai2021,Zhang2009,Zhou2019,Zhou2017,Zhou2020}
have also attracted a great deal of attention.
More results on graph factors are referred to the survey papers and books \cite{Akiyama1985,Plummer2007,Yu2009}.

Recently, Kaneko \cite{Kaneko2003} gave a characterization for a graph with a $P_{\geq 3}$-factor,
for which Kano et al. \cite{Kano2004} presented a simpler proof.

\begin{theorem}\label{thm:1}(Kaneko \cite{Kaneko2003})
A graph $G$ has a $P_{\geq 3}$-factor if and only if $sun(G-X)\leq 2|X|$ for all $X\subseteq V(G)$.
\end{theorem}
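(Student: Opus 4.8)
This is a Tutte--Berge--type characterization, so the plan is to establish the two implications separately: necessity is short, and sufficiency carries the real content. Both directions rest on one sublemma, which I would isolate and prove first: \emph{no sun admits a $P_{\geq 3}$-factor}. This is trivial for $K_1$ and $K_2$. For a big sun $R$ built from a factor-critical graph $H$ on vertices $v_1,\dots,v_n$ (so $n$ is odd and $n\geq 3$) together with pendants $u_1,\dots,u_n$ joined by the edges $v_iu_i$, suppose $F$ were a $P_{\geq 3}$-factor of $R$. Each $u_i$ has degree one in $R$, hence is an endpoint of its path and is immediately followed by $v_i$; since no two pendants are adjacent, the vertex after $v_i$ lies in $V(H)$, so every path of $F$ contains $0$, $1$, or $2$ pendants, and a path meeting a pendant consumes at least $2$ vertices of $H$ while a path meeting no pendant consumes at least $3$. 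Writing $a,b,c$ for the numbers of paths meeting $2$, $1$, $0$ pendants, the identities ``(pendants used) $=n$'', i.e. $2a+b=n$, and ``($H$-vertices used) $=n$'', i.e. $n\geq 2a+2b+3c$, force $b=c=0$; but then $n=2a$ is even, contradicting that $n$ is odd.

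\emph{Necessity.} Suppose $G$ has a $P_{\geq 3}$-factor $F$ and fix $X\subseteq V(G)$; let $R_1,\dots,R_s$ be the sun components of $G-X$, where $s=sun(G-X)$. If for some $i$ no edge of $F$ joined $R_i$ to $X$, then (as $R_i$ is a component of $G-X$) no $F$-path could have an edge leaving $R_i$, so $F$ restricted to $V(R_i)$ would be a $P_{\geq 3}$-factor of $R_i$, contradicting the sublemma. Hence $F$ contains at least $s$ distinct edges meeting $X$, while $\sum_{x\in X}d_F(x)\leq 2|X|$ because $F$ is a disjoint union of paths; therefore $s\leq 2|X|$.

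\emph{Sufficiency} is the heart of the argument and where I expect the work. Assume $sun(G-X)\leq 2|X|$ for every $X$ but, for contradiction, that $G$ has no $P_{\geq 3}$-factor. Adding an edge to $G$ can only merge or destroy sun components, so the hypothesis is preserved under adding edges; hence I may take $G$ edge-maximal subject to ``satisfies the hypothesis but has no $P_{\geq 3}$-factor'', and such a $G$ is not complete since a complete graph of order $\geq 3$ is itself a path. Following the Tutte--Berge edge-maximality paradigm, let $X$ be the set of vertices of $G$ adjacent to all others, and analyze $G-X$: the structural claim to prove is that each component of $G-X$ is either a complete graph or a sun, with controlled adjacencies back to $X$. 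Granting such a description, one finishes with a defect-Hall argument on the bipartite incidence between $X$ and the components of $G-X$ that carry no internal $P_{\geq 3}$-factor: the hypothesis $sun(G-X)\leq 2|X|$ lets one route, for each offending component, one or two edges into $X$ so that the component plus those edges is coverable by paths of order $\geq 3$, with the complete components absorbing the leftover vertices of $X$; this yields a $P_{\geq 3}$-factor of $G$, contradicting its choice. (Alternatively one can induct: extract a path $P$ on $3$, $4$, or $5$ vertices --- every path of order $\geq 3$ decomposes into such pieces --- from a longest path or a DFS tree so that $G-V(P)$ still satisfies the hypothesis, and iterate.)

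The main obstacle is precisely this sufficiency direction, and within it the treatment of the \emph{big-sun} components of $G-X$: unlike an isolated vertex, a big sun cannot be plugged into $X$ through an arbitrary edge, because its pendant structure is rigid, so one must select an edge from the sun to $X$ compatible with covering the remainder of the sun by paths of order $\geq 3$, and do so for all offending components simultaneously --- a defect version of Hall's theorem on the $X$--component incidence graph. Verifying Hall's condition is exactly where the full ``for all $X$'' strength of the hypothesis is needed, and converting ``one pendant diverted into $X$'' into ``a $P_{\geq 3}$-factor of the rest of the sun'' is exactly where the factor-criticality of the core $H$ is used.
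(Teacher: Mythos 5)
The paper does not actually prove Theorem \ref{thm:1}: it is quoted from Kaneko, with the simpler proof credited to Kano, Katona and Kir\'aly, so there is no internal argument to compare yours against. Judged on its own terms, your sublemma (no sun admits a $P_{\geq 3}$-factor) and the necessity direction are correct and complete: the count $2a+b=n\geq 2a+2b+3c$ played against the parity of $n$ handles big suns cleanly, and the bound $\sum_{x\in X}d_F(x)\leq 2|X|$ for a union of paths finishes necessity properly.

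The sufficiency direction, however, is only a plan, and its first concrete step already fails. You assert that the hypothesis $sun(G-X)\leq 2|X|$ is preserved under adding edges ``because adding an edge can only merge or destroy sun components.'' That is false: adding an edge inside a non-sun component can create a sun. Take $R$ to be the $6$-vertex sun over $H=K_3$ with triangle $v_1v_2v_3$ and pendants $u_1,u_2,u_3$, and let $e=v_1v_2$; then $R-e$ is connected but is not a sun (its core $P_3$ is not factor-critical), while $(R-e)+e=R$ is one, so for a fixed $X$ the quantity $sun(G-X)$ can strictly increase under edge addition. One can try to salvage monotonicity of the condition by exhibiting, whenever this happens, a different violating set for the original graph --- in the example $X\cup\{v_3\}$ yields three sun components $K_2,K_2,K_1$ --- but that is itself a nontrivial lemma you have not supplied. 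Beyond that, the structural description of an edge-maximal counterexample (``each component of $G-X$ is complete or a sun with controlled adjacencies'') and the defect-Hall routing of suns into $X$ are stated as goals rather than proved, and this is precisely where the entire difficulty of Kaneko's theorem lives: the published proofs (Kaneko's original, and the simplification of Kano, Katona and Kir\'aly via maximum path packings) each spend several pages here. So the proposal establishes the easy direction only and leaves the hard direction as an outline containing at least one incorrect step.
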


A claw is a graph isomorphic to $K_{1,3}$.
A graph $G$ is said to be claw-free if there is no induced subgraph of $G$ isomorphic to $K_{1,3}$.
For a 2-connected claw-free graph $G$,
Kelmans \cite{Kelmans2011} obtained a sufficient condition for the existence of $P_3$-factors in $G-x$ for any $x\in V(G)$
and in $G-e$ for any $e\in E(G)$, respectively.
Motivated by the two results, we naturally consider the more general problem as following:
\begin{pro}
Does $G-V'$ have a $P_{\geq 3}$-factor for any $V'\subseteq V(G)$ with $|V'|=l$, or is a graph $G$ a $(P_{\geq 3},l)$-factor critical graph?
\end{pro}
\begin{pro}
Does $G-E'$ have a $P_{\geq 3}$-factor for any $E'\subseteq E(G)$ with $|E'|=m$, or is a graph $G$ a $(P_{\geq 3},m)$-factor deleted graph?
\end{pro}

Suppose $G$ is of order $n\geq 3$. Then clearly if $G$ is Hamiltonian, $G$ has a $P_{\geq 3}$-factor by deleting one edge from a Hamiltonian cycle. So our goal is to replace
``$d_G(u)+d_G(v)\geq n$ for every pair of nonadjacent vertices of $G$'' by a weaker condition of the same flavor. Here, we use the graphic parameter \emph{degree sum}.
Let $G$ be a graph containing at least $k$ independent vertices, define the degree sum
$$\sigma_k(G)=\min_{X\subseteq V(G)}\Big\{\sum_{x\in X}d_G(x): \mathrm{the~set} X~\mathrm{is~independent~and~contains}~k~\mathrm{vertices}\Big\}.$$
Note that when $k=2$, this corresponds to taking the minimum of $d_G(u)+d_G(v)$ over every pair of nonadjacent vertices of $G$, part of the crux of the statement of Ore's Theorem.

In this paper, in terms of sun toughness and degree sum, we investigate the graphs admitting path-factors in some special settings.
As main results, we obtain two sufficient conditions for graphs to be $(P_{\geq 3},l)$-factor critical graphs and $(P_{\geq 3},m)$-factor deleted graphs.

\section{$(P_{\geq3},l)$-factor critical graph}
\label{sec:2}

\begin{theorem}\label{thm:2}
Let $l\geq1$ be an integer.
A graph $G$ with $\kappa(G)\geq l+2$ is a $(P_{\geq3},l)$-factor critical graph if $s(G)>\frac{l+1}{3}$.
\end{theorem}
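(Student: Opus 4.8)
The plan is to argue by contradiction using Kaneko's characterization (Theorem~\ref{thm:1}). Suppose $G$ satisfies $\kappa(G)\ge l+2$ and $s(G)>\frac{l+1}{3}$, but $G$ is not $(P_{\ge 3},l)$-factor critical. Then there is a set $V'\subseteq V(G)$ with $|V'|=l$ such that $H:=G-V'$ has no $P_{\ge 3}$-factor. By Theorem~\ref{thm:1} applied to $H$, there exists $X_0\subseteq V(H)$ with $sun(H-X_0)\ge 2|X_0|+1$. Set $X:=V'\cup X_0$, so that $G-X=H-X_0$ and $sun(G-X)\ge 2|X_0|+1=2(|X|-l)+1$. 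The goal is to derive a contradiction with the sun toughness bound.

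First I would dispose of small cases. If $|X_0|=0$, then $sun(G-V')\ge 1$; I would argue that since $\kappa(G)\ge l+2$, the graph $G-V'$ is connected (indeed $(l+2)-l=2$-connected), so $G-V'$ is a single sun component, which since $G-V'$ is $2$-connected must be $K_1$ or $K_2$ or a big sun — and then $V'$ together with a suitably chosen vertex of this sun forms a small cut set, contradicting connectivity or forcing a clean contradiction with $s(G)>\frac{l+1}{3}$ after a short computation; the connectivity hypothesis $\kappa(G)\ge l+2$ is exactly what rules out the degenerate configurations. For the main case $|X_0|\ge 1$, note first that $G-X$ has at least $2|X_0|+1\ge 3\ge 2$ sun components, so $X$ is a legitimate candidate in the definition of $s(G)$, giving
\[
\frac{l+1}{3}<s(G)\le \frac{|X|}{sun(G-X)}\le \frac{|X_0|+l}{2|X_0|+1}.
\]
Cross-multiplying, $(l+1)(2|X_0|+1)<3(|X_0|+l)$, i.e. $2(l+1)|X_0|+(l+1)<3|X_0|+3l$, i.e. $(2l-1)|X_0|<2l-1$. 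Since $l\ge 1$ gives $2l-1\ge 1>0$, this forces $|X_0|<1$, contradicting $|X_0|\ge 1$.

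The main obstacle I anticipate is the boundary case $|X_0|=0$ (equivalently $X\subseteq V'$ up to the trivial part): here the inequality chain above degenerates because $sun(G-X)$ could be $1$, which is not allowed in the definition of $s(G)$, so sun toughness gives no information directly. The resolution must come entirely from the connectivity hypothesis: I would show that $G-V'$ being $2$-connected and of order $\ge 3$ (one should also handle $|V(G-V')|\le 2$ separately, but then $n\le l+2$ conflicts with $\kappa(G)\ge l+2$ unless $G$ is complete, in which case the statement is immediate) cannot be a single sun unless it is a big sun; but a big sun has cut vertices (the vertex $v_i$ separating its pendant $u_i$), contradicting $2$-connectedness. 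Hence $sun(G-V')=0$, so $G-V'$ has a $P_{\ge 3}$-factor by Theorem~\ref{thm:1}, a contradiction. I would also double-check the edge case $l=1$ throughout, since several of the inequalities are tight there ($2l-1=1$), and confirm that the strictness of $s(G)>\frac{l+1}{3}$ (as opposed to $\ge$) is what makes the final arithmetic contradiction go through.
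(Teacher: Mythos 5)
Your proposal is correct and follows essentially the same route as the paper: rule out $X_0=\emptyset$ using that $G-V'$ is $2$-connected of order at least $3$ and hence not a sun, then feed $X=V'\cup X_0$ into the definition of $s(G)$ and derive the arithmetic contradiction (your cross-multiplication $(2l-1)|X_0|<2l-1$ is just a reorganization of the paper's bound $|X|\le\frac{sun(G'-X)-1}{2}$ combined with $sun(G'-X)\ge 3$). The only nitpick is the phrase ``so $G-V'$ has a $P_{\ge 3}$-factor by Theorem~\ref{thm:1}'' in the boundary case: the clean contradiction is simply $sun(G-V')=0$ versus $sun(G-V')\ge 2|X_0|+1=1$.
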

\begin{proof}
If $G$ is a complete graph, then it is easily seen that $G$ is a $(P_{\geq3},l)$-factor critical graph.
Hence, we assume that $G$ is not a complete graph.

For any $V'\subseteq V(G)$ with $|V'|=l$, we write $G'=G-V'$.
To verify the theorem, we only need to prove that $G'$ contains a $P_{\geq 3}$-factor.
On the contrary, we assume that $G'$ has no $P_{\geq 3}$-factor.
Then by Theorem \ref{thm:1}, there exists a subset $X\subseteq V(G')$ such that $sun(G'-X)>2|X|$.
In terms of the integrality of $sun(G'-X)$, we obtain that
\begin{equation}\label{eqn1-1}
sun(G'-X)\geq2|X|+1.
\end{equation}

\begin{cla}\label{cla1-1}
$X\neq\emptyset$ and $sun(G'-X)\geq3$.
\end{cla}
\begin{proof}
On the contrary, we assume that $X=\emptyset$.
On the one hand, since $\kappa(G')\geq\kappa(G)-l\geq 2$, $G'$ is a 2-connected graph such that $|G'|\geq3$.
Thus, $G'$ is not a sun, i.e., $sun(G')=0$.
On the other hand, it follows from (\ref{eqn1-1}) that $sun(G')=sun(G'-X)\geq2|X|+1=1$, which contradicts $sun(G')=0$.
Hence, $X\neq\emptyset$ and $|X|\geq1$.
By (\ref{eqn1-1}), we have that $sun(G'-X)\geq2|X|+1\geq3$.
\end{proof}

\vspace{3mm}

Note that $sun(G-V'\cup X)=sun(G'-X)\geq3$.
Combining this with $s(G)>\frac{l+1}{3}$ and the definition of $s(G)$, we obtain that
\begin{eqnarray*}
\frac{l+1}{3}
&<& s(G)
\\
&\leq& \frac{|V'\cup X|}{sun(G-V'\cup X)}
\\
&=& \frac{|X|+l}{sun(G'-X)}
\\
&\leq& \frac{\frac{sun(G'-X)-1}{2}+l}{sun(G'-X)}
\\
&=& \frac{1}{2}+\frac{2l-1}{2sun(G'-X)}
\\
&\leq& \frac{1}{2}+\frac{2l-1}{6}
\\
&=&  \frac{l+1}{3},
\end{eqnarray*}
where the last inequality follows from Claim \ref{cla1-1}.
By (\ref{eqn1-1}), we have that $|X|\leq \frac{sun(G'-X)-1}{2}$ and the third inequality follows.
This is a contradiction and completes the proof of Theorem \ref{thm:2}.
\end{proof}

\begin{remark}
The conditions $s(G)>\frac{l+1}{3}$ and $\kappa(G)\geq l+2$ in Theorem \ref{thm:2} cannot be replaced by
$s(G)\geq\frac{l+1}{3}$ and $\kappa(G)\geq l+1$.
We consider the graph $G=K_{l+1}\vee3K_2$, and choose $V'\subseteq V(K_{l+1})$ with $|V'|=l$.
Then it is easily seen that $s(G)=\frac{l+1}{3}$ and $\kappa(G)=l+1$.
Let $G'=G-V'$.
For $X=V(K_{l+1})\setminus V'$, we have that $sun(G' -X)=3>2=2|X|$.
In view of Theorem \ref{thm:1}, $G'$ has no $P_{\geq 3}$-factor.
Hence, $G$ is not a $(P_{\geq 3},l)$-factor critical graph.
\end{remark}

\begin{theorem}\label{thm2}
Let $l\geq1$ be an integer.
A graph $G$ with $\kappa(G)\geq l+1$ is a $(P_{\geq3},l)$-factor critical graph if $\sigma_3(G)\geq n+2l$.
\end{theorem}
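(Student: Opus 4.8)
The plan is to mirror the structure of the proof of Theorem \ref{thm:2}, replacing the sun-toughness estimate by an argument that extracts many independent vertices of small degree once a large bad set is found via Kaneko's criterion. As before, I would dispose of the complete-graph case immediately, then fix $V'\subseteq V(G)$ with $|V'|=l$, set $G'=G-V'$, and assume for contradiction that $G'$ has no $P_{\geq 3}$-factor. By Theorem \ref{thm:1} there is $X\subseteq V(G')$ with $sun(G'-X)\geq 2|X|+1$. Using $\kappa(G)\geq l+1$, so $\kappa(G')\geq 1$ (and in fact $G'$ is connected with $|G'|\geq 3$ once we note $\sigma_3$ forces $n$ large), I would first argue $X\neq\emptyset$: if $X=\emptyset$ then $G'$ is connected, hence $sun(G')\leq 1$, but $sun(G')\geq 1$ would force $G'\cong K_1$, contradicting $|G'|\geq 3$. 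So $|X|\geq 1$ and $sun(G'-X)\geq 3$.

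The heart of the matter is to produce three independent vertices whose degree sum in $G$ is strictly less than $n+2l$. Since $sun(G'-X)\geq 3$, pick three distinct sun components $R_1,R_2,R_3$ of $G'-X$. From each $R_i$ I want to select one vertex $x_i$ that is an \emph{endpoint-type} vertex of the sun — concretely, if $R_i=K_1$ take its unique vertex; if $R_i=K_2$ take either vertex; if $R_i$ is a big sun, take one of the degree-one vertices $u_j$ attached to the factor-critical core. In every case $x_i$ has at most one neighbour inside $R_i$, and $\{x_1,x_2,x_3\}$ is independent in $G'$ and, since the $R_i$ lie in $G-(V'\cup X)$, also independent in $G$. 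Now bound $d_G(x_i)\leq |R_i| - 1 + |X| + |V'| $ only crudely; the real estimate should use that the $R_i$ are vertex-disjoint and contained in $V(G)\setminus(X\cup V')$, giving $\sum_{i=1}^3 d_G(x_i) \le \bigl(\sum_{i=1}^3|R_i|\bigr) + 3|X| + 3l - (\text{correction for low internal degree})$. The key counting fact is that $\sum_i |R_i| + (\text{other components}) + |X| + |V'| \le n$, and that each sun component contributes at least two vertices "cheaply'' because a sun on $t$ vertices has a vertex of degree $\le (t/2)$ or so; I expect to push this to $\sigma_3(G)\le \sum d_G(x_i) \le n + 2l - 1 < n+2l$, contradicting the hypothesis.

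Carrying out that last inequality cleanly is the step I expect to be the main obstacle: I must choose the $x_i$ and account for the internal edges of the suns so that the three degrees, summed, beat $n+2l$ by at least one. The safe route is to take $x_i$ to be a leaf $u_j$ of a big sun (degree exactly $1$ inside $R_i$, contributing only $1 + |X| + l$ to its degree in $G$), or the vertex of a $K_1$-component (degree $0$ inside, contributing $|X|+l$), or a vertex of a $K_2$-component (degree $1$ inside). Writing $s_i=|R_i|$ and using $s_1+s_2+s_3 \le n - |X| - l$ together with the fact that each big sun has $s_i\ge 4$ and an even core, one gets $\sum d_G(x_i) \le 3 + 3|X| + 3l$ in the worst case, while $n - |X| - l \ge s_1+s_2+s_3 \ge $ (something comparable), and combining with $sun(G'-X)\ge 2|X|+1$ to control $|X|$ against $n$ should close the gap. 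I would also handle separately the degenerate possibility that $G'-X$ has no big sun at all (all three chosen components are $K_1$ or $K_2$), where the bound is even stronger. Finally, if fewer than three independent vertices were available one would invoke $\kappa(G)\geq l+1$ to rule that out, but the existence of $3$ disjoint sun components already supplies them. The contradiction with $\sigma_3(G)\geq n+2l$ then finishes the proof.
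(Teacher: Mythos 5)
Your overall strategy is the same as the paper's (Kaneko's criterion, then three independent low-degree vertices drawn from sun components, then a degree-sum/counting contradiction), but the proposal has three genuine gaps. First, your treatment of the case $X=\emptyset$ is wrong: a connected graph $G'$ with $sun(G')\geq 1$ need not be $K_1$ — it could be a big sun, which is connected and can have arbitrarily many vertices. The paper closes this case by observing that a big sun has three independent vertices of degree one, so $\sigma_3(G-V')\leq 3$, contradicting $\sigma_3(G-V')\geq \sigma_3(G)-3l\geq n-l$ once $n-l$ is known to be at least $7$. Second, the counting step you flag as ``the main obstacle'' is indeed where the work lies, and your crude bound does not close. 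With $x_i$ having at most one neighbour inside its component, one only gets $\sum_i d_G(x_i)\leq 3+3|X|+3l$, while the component count gives only $n\geq l+|X|+sun(G'-X)\geq l+3|X|+1$; this falls short of the needed $n\geq 3|X|+l+4$. The paper resolves this by splitting into cases according to the number $i(H-S)$ of isolated vertices of $H-S$ and using the sharper bound $n-l\geq |S|+2\cdot sun(H-S)-i(H-S)$ (every non-$K_1$ sun component has at least two vertices), which is exactly the refinement you gesture at but do not execute.

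Third, and most importantly, even after that refinement the worst case ($i(H-S)\leq 1$, $|S|=1$, all three chosen vertices being leaves of big suns or vertices of $K_2$-components) only yields $n-l\leq 6$. This is not yet a contradiction: you must separately prove a lower bound $n\geq l+7$, which the paper does in its Claim 1 by explicitly constructing $P_{\geq 3}$-factors of $H$ when $n-l=5$ and $n-l=6$ using the hypothesis $\sigma_3(H)\geq n-l$. Your parenthetical remark that ``$\sigma_3$ forces $n$ large'' is not automatic — the trivial bound $\sigma_3(G)\leq 3(n-1)$ only gives $n\geq l+2$ — so this small-case analysis cannot be skipped. Until the $X=\emptyset$ big-sun case, the isolated-vertex case split, and the $n\geq l+7$ lemma are supplied, the proof is incomplete.
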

\begin{proof}
If $G$ is a complete graph, then it is easily seen that $G$ is a $(P_{\geq3},l)$-factor critical graph.
Hence, we assume that $G$ is not a complete graph.

For any $V'\subseteq V(G)$ with $|V'|=l$, we write $H=G-V'$.
To verify the theorem, we only need to prove that $H$ contains a $P_{\geq 3}$-factor.

By contradiction, suppose that $H$ admits no $P_{\geq3}$-factor.

\textbf{Claim 1.}~~$n\geq l+7$.

\begin{proof}
By $\sigma_3(H)\geq \sigma_3(G)-3l\geq n-l$, it is easy to verify that $n-l\geq5$.
Let $\{u_1,u_2,u_3\}$ be an independent set of $H$.
If $n-l=5$, let $\{v_1,v_2\}=V(H)\setminus \{u_1,u_2,u_3\}$.
Since $\Sigma_{i=1}^3d_H(u_i)\geq\sigma_3(H)\geq 5$, without loss of generality, we may assume that $d_H(u_1)=d_H(u_2)=2$ and $u_3v_1\in E(H)$.
Then we can obtain a path: $u_1v_2u_2v_1u_3$, which can be viewed as a $P_{\geq3}$-factor of $H$, a contradiction.

Next, we consider the case that $n-l=6$. Let $\{v_1,v_2,v_3\}=V(H)\setminus \{u_1,u_2,u_3\}$.
Since $\Sigma_{i=1}^3d_H(u_i)\geq\sigma_3(H)\geq 6$, without loss of generality, we may assume that $d_H(u_i)\geq 2$ for $i=1,2,3$ or $d_H(u_1)=3,d_H(u_2)\geq2,d_H(u_3)=1$.
\begin{itemize}
  \item[$\bullet$] If, for $i=1,2,3$, $d_H(u_i)\geq 2$ and $|N_H(u_i)\cap N_H(u_j)|=1$
  for every $1\leq i<j\leq3$, then without loss of generality, we may assume that $N_H(u_1)=\{v_1,v_2\},N_H(u_2)=\{v_2,v_3\},N_H(u_3)=\{v_1,v_3\}$.
  Then we can obtain a path: $u_1v_1u_3v_3u_2v_2$, which can be viewed as a $P_{\geq3}$-factor of $G$, a contradiction.
  \item[$\bullet$]  If, for $i=1,2,3$, $d_H(u_i)\geq 2$ and $|N_H(u_i)\cap N_H(u_j)|\geq2$ holds
  for some $1\leq i<j\leq3$, then without loss of generality, we may assume that $\{v_1,v_2\}\subseteq N_H(u_1)\cap N_H(u_2),N_H(u_3)=\{v_2,v_3\}$.
  Then we can obtain a path: $u_1v_1u_2v_2u_3v_3$, which can be viewed as a $P_{\geq3}$-factor of $G$, a contradiction.
  \item[$\bullet$]  If $d_H(u_1)=3,d_H(u_2)\geq2$, and $d_H(u_3)=1$, then without loss of generality, we assume that $N_H(u_1)=\{v_1,v_2,v_3\},N_H(u_2)=\{v_1,v_2\},|N_H(u_3)\cap\{v_2,v_3\}|=1$.
Then we can obtain a path: $u_3v_2u_2v_1u_1v_3$ or $u_3v_3u_1v_1u_2v_2$, which can be viewed as a $P_{\geq3}$-factor of $G$, a contradiction.
\end{itemize}\end{proof}

By Theorem \ref{thm:1} and the integrality of $sun(H-S)$, there exists $S\subseteq V(H)$ such that
\begin{equation}\label{eqn2-1}
sun(H-S)\geq2|S|+1.
\end{equation}

\mbox{}\\
\textbf{Claim 2.}~~$S\neq \emptyset$ and $sun(H-S)\geq3$.

\begin{proof}
Suppose that $S=\emptyset$, then $sun(G)=sun(H-S)\geq1$  by (\ref{eqn2-1}).
On the other hand, as $H$ is connected, we have $sun(H)\leq\omega(H)=1$.
So $H$ is a big sun with $n-l=|V(H)|\geq7$.
Due to the definition of a big sun, there are three distinct vertices of degree one, and the vertices set is denoted by $\{u,v,w\}$.
Obviously, $\{u,v,w\}$ is an independent set of $H$.
So, we have $3=d_{H}(u)+d_{H}(v)+d_{H}(w)\geq\sigma_{3}(H)\geq n-l\geq7$, a contradiction.
Thus we obtain $|S|\geq1$.
This together with (\ref{eqn2-1}) implies that $sun(H-S)\geq2|S|+1\geq3$. \end{proof}

Denote the set of isolated vertices of $H-S$ by $I(H-S)$.
Next, we consider three cases.

\vspace{3mm}

{\bf Case~1}. $i(H-S)\geq3$.

\vspace{2mm}

In this case, we can choose three independent vertices $u,v,w\in I(H-S)$.
It follows that
$d_H(u)+d_H(v)+d_H(w)\geq\sigma_3(H)\geq n-l.$
This together with $N_{H}(u)\cup N_{H}(v)\cup N_{H}(w)\subseteq S$ implies
$$
|S| \geq \max\{d_H(u),d_H(v),d_H(w)\}\geq \frac{\sigma_3(H)}{3}\geq \frac{n-l}{3}.
$$

Combining (\ref{eqn2-1}) and the inequality above, we have that
$n-l\geq|S|+sun(G-S)\geq3|S|+1\geq n-l+1$, which is a contradiction.

\vspace{3mm}

{\bf Case~2}. $i(H-S)=2$.

\vspace{2mm}

By Claim 2, $H-S$ has a sun component containing at least two vertices, denoted by $C$.
Let $u,v$ be two distinct vertices in $I(H-S)$.
Due to the definition of a sun, we can choose a vertex $w\in V(C)$ such that $d_{C}(w)=1$.
Obviously, $\{u,v,w\}$ is independent in $H$.
It follows that
$d_H(u)+d_H(v)+d_H(w)\geq\sigma_3(H)\geq n-l.$
Since $N_{H}(u)\cup N_{H}(v)\subseteq S$ and $d_{S}(w)=d_{H}(w)-1$, we obtain:
\begin{eqnarray*}
|S|
&\geq& \max\{d_S(u),d_S(v),d_S(w)\}
\\
&\geq& \frac{d_S(u)+d_S(v)+d_S(w)}{3}
\\
&=& \frac{\sigma_3(H)-1}{3}
\\
&\geq& \frac{n-l-1}{3}.
\end{eqnarray*}
This together with (\ref{eqn2-1}) implies
\begin{eqnarray*}
n-l
&\geq& |S|+2\times sun(G-S)-i(G-S)
\\
&\geq& |S|+2\times (2|S|+1)-2
\\
&\geq& 5|S|
\\
&\geq& \frac{5n-5l-5}{3},
\end{eqnarray*}
that is, $n-l\leq2$, a contradiction to Claim 1.

\vspace{3mm}

{\bf Case~3}. $i(H-S)\leq1$.

\vspace{2mm}

By Claim 2, $H-S$ has at least three sun components, denoted by $C_1,C_2,C_3$.
We can choose vertex $x_i\in V(C_i)$ such that $d_{C_i}(x_i)\leq1$ for every $i=1,2,3$.
Then $\{x_1,x_2,x_3\}$ is independent in $H$, and thus
\begin{equation}\label{eqn2-3}
\sum_{i=1}^3d_{H}(x_i)\geq\sigma_3(H)\geq n-l.
\end{equation}
Note that $d_{S}(x_i)\geq d_{G}(x_i)-1$ for every $i=1,2,3$.
This together with (\ref{eqn2-3}) implies
\begin{equation}\label{eqn2-4}
|S|\geq\max\{d_S(x_i):i=1,2,3\}\geq\frac{\sum_{i=1}^3d_{S}(x_i)}{3}\geq \frac{n-l}{3}-1.
\end{equation}
Combining (\ref{eqn2-1}) and (\ref{eqn2-4}), we obtain
\begin{eqnarray*}
n-l
&\geq& |S|+2\times sun(G-S)-i(G-S)
\\
&\geq& |S|+2\times (2|S|+1)-i(G-S)
\\
&\geq& 5|S|+1
\\
&\geq& \frac{5n-5l}{3}-4,
\end{eqnarray*}
that is, $n-l\leq6$.
This is a contradiction, and Theorem \ref{thm2} holds.
\end{proof}

\section{$(P_{\geq 3},m)$-factor deleted graphs}
\label{sec:3}

Zhou\cite{Zhou2019} verified that a 2-connected graph $G$ is a $P_{\geq 3}$-factor deleted graph if $s(G)\geq1$.
We extend the above result and give a sufficient condition for a graph being a $(P_{\geq 3},m)$-factor deleted graph.

\begin{theorem}\label{thm:3}
Let $m\geq1$ be an integer.
A graph $G$ with $\kappa(G)\geq 2m+1$ is a $(P_{\geq3},m)$-factor deleted graph if $s(G)\geq\frac{m+1}{m+2}$.
\end{theorem}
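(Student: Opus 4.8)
The plan is to mimic the structure of the proof of Theorem \ref{thm:2}, but working with edge deletions rather than vertex deletions. Suppose $G$ has $\kappa(G)\geq 2m+1$ and $s(G)\geq\frac{m+1}{m+2}$. If $G$ is complete the claim is clear, so assume $G$ is not complete. Fix $E'\subseteq E(G)$ with $|E'|=m$ and write $H=G-E'$; we must show $H$ has a $P_{\geq 3}$-factor. Arguing by contradiction and invoking Theorem \ref{thm:1} together with integrality, we get a set $S\subseteq V(H)=V(G)$ with $sun(H-S)\geq 2|S|+1$.

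First I would establish the analogue of Claim \ref{cla1-1}: $S\neq\emptyset$ and $sun(H-S)\geq 3$. If $S=\emptyset$, then $sun(H)\geq 1$, so $H$ is a single sun (being connected, since deleting $m$ edges from a $(2m+1)$-connected graph leaves it at least $1$-connected — in fact $(m+1)$-connected). But a $(P_{\geq 3},m)$-factor-deleted candidate cannot have $H$ equal to a sun: $K_1$ and $K_2$ are too small (one can rule these out since $\kappa(G)\geq 3$ forces $|V(G)|\geq 4$), and a big sun $H$ has at least three degree-one vertices, contradicting that $G$ is $(2m+1)$-connected with $2m+1\geq 3$ (each such vertex had degree $\geq 2m+1$ in $G$ and lost at most $m<2m+1-1$ edges... actually losing $m$ edges from a vertex of degree $\geq 2m+1$ leaves degree $\geq m+1\geq 2$, so no degree-one vertices can appear). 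Hence $S\neq\emptyset$, and then $sun(H-S)\geq 2|S|+1\geq 3$.

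The key step is to bound $sun(H-S)$ from below in terms of $sun(G-S)$. Since $H=G-E'$ with $|E'|=m$, deleting $m$ edges can split each component into at most $m+1$ pieces in total — more precisely, $\omega(H-S)\leq \omega(G-S)+m$, and moreover each sun component of $H-S$ either was already a sun component of $G-S$ or arose from breaking up a component of $G-S$. Counting carefully, the $m$ deleted edges create at most $m$ "extra" components beyond those of $G-S$, so $sun(H-S)\leq sun(G-S)+m$, giving $sun(G-S)\geq sun(H-S)-m\geq 2|S|+1-m$. I also need this to be at least $2$ to apply the definition of $s(G)$: since $|S|\geq 1$, we have $2|S|+1-m$ could be small if $m$ is large, so here I would instead keep $sun(H-S)\geq 2|S|+1$ and note that $S$ is large relative to $m$ — using $\kappa(G)\geq 2m+1$ one shows $|S|\geq\ ?$... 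Actually the cleaner route: apply $s(G)$ to the set $S$ in $G$ (when $sun(G-S)\geq 2$), obtaining $\frac{m+1}{m+2}\leq s(G)\leq \frac{|S|}{sun(G-S)}\leq\frac{|S|}{2|S|+1-m}$. Then $(m+1)(2|S|+1-m)\leq (m+2)|S|$, i.e. $m|S|\leq (m+1)(m-1)+ \text{(lower order)}$, bounding $|S|\leq m-1$ or so; combined with $sun(H-S)\geq 2|S|+1$ and the connectivity $\kappa(G)\geq 2m+1$ this forces a contradiction since removing $\leq m-1$ vertices from a $(2m+1)$-connected graph cannot disconnect it, let alone create $\geq 3$ sun components.

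The main obstacle I anticipate is the edge-to-component counting: making precise how deleting $m$ edges affects $sun(\cdot)$, and in particular handling the case $sun(G-S)=1$ or $0$ (where $s(G)$ gives no information directly). I expect the resolution is that when $sun(G-S)\leq 1$, the bound $sun(H-S)\leq sun(G-S)+m\leq m+1$ contradicts $sun(H-S)\geq 2|S|+1\geq 3$ only if $|S|$ is small, which is then killed by the high connectivity of $G$; and when $sun(G-S)\geq 2$ the displayed inequality chain closes as above. Threading these sub-cases together cleanly, while keeping track of the exact constant $\frac{m+1}{m+2}$, is where the real work lies.
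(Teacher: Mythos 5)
Your overall strategy matches the paper's: argue by contradiction via Theorem \ref{thm:1}, relate $sun(G'-S)$ to $sun(G-S)$ by counting the effect of the $m$ deleted edges, split into the cases $sun(G-S)\geq 2$ (where $G-S$ is disconnected, so $|S|\geq\kappa(G)\geq 2m+1$, and the sun-toughness inequality closes) and $sun(G-S)\leq 1$ (where $|S|\leq m$ and an edge-connectivity argument shows $G'-S$ stays connected). However, your key counting step is wrong. You claim $sun(H-S)\leq sun(G-S)+m$ on the grounds that $m$ edge deletions create at most $m$ extra components. The component count does satisfy $\omega(H-S)\leq\omega(G-S)+m$, but the \emph{sun}-component count can increase by $2$ per deleted edge: a single non-sun component can split into two sun components. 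For instance, a component isomorphic to $P_4$ is not a sun, yet deleting its middle edge produces two copies of $K_2$, each of which is a sun by Kaneko's convention — an increase of $2$ from one edge. The correct bound, and the one the paper uses, is $sun(H-S)\leq sun(G-S)+2m$.

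This error propagates into your arithmetic: with the corrected bound one gets $sun(G-S)\geq 2|S|+1-2m$, and the inequality $\frac{m+1}{m+2}\leq\frac{|S|}{2|S|+1-2m}$ yields $m|S|\leq(m+1)(2m-1)$, i.e.\ $|S|\leq 2m$, not $|S|\leq m-1$; this still contradicts $|S|\geq 2m+1$ from the disconnected case, so the argument can be repaired locally, but your stated constants do not. Separately, in the case $sun(G-S)\leq 1$ your closing sentence ("removing $\leq m-1$ vertices from a $(2m+1)$-connected graph cannot disconnect it") ignores that the $m$ edges of $E'$ are also removed; what is actually needed is $\lambda(G-S)\geq\kappa(G)-|S|\geq m+1>m=|E'|$, so that $G'-S$ remains connected and hence $sun(G'-S)\leq 1<2|S|+1$. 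You gesture at this earlier, but it must be made explicit, since the contradiction in that case rests entirely on it.
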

\begin{proof}
If $G$ is a complete graph, then it is obvious that $G$ is a $(P_{\geq3},m)$-factor deleted graph by $\lambda(G)\geq\kappa(G)\geq2m+1$
and the definition of $(P_{\geq3},m)$-factor deleted graph.
Hence, we may assume that $G$ is a non-complete graph.

For any $E'\subseteq E(G)$ with $|E'|=m$, we write $G'=G-E'$.
To verify the theorem, we only need to prove that $G'$ has a $P_{\geq 3}$-factor.
By contradiction, we assume that $G'$ has no $P_{\geq 3}$-factor.
Then by Theorem \ref{thm:1}, there exists a subset $X\subseteq V(G')$ such that $sun(G'-X)>2|X|$.
In terms of the integrality of $sun(G'-X)$, we obtain that
\begin{equation}\label{eqn1-2}
sun(G'-X)\geq2|X|+1.
\end{equation}

It follows from $\lambda(G)\geq\kappa(G)\geq2m+1$ that $\lambda(G')\geq\lambda(G)-m\geq m+1\geq2$.
Hence, $sun(G')=0$ by the definition of sun component.
Next, we claim that $X\neq\emptyset$.
Otherwise, $X=\emptyset$, and thus $sun(G')=sun(G'-X)\geq2|X|+1=1$ by (\ref{eqn1-2}), which contradicts $sun(G')=0$.
Therefore, we have that $X\neq\emptyset$ and $|X|\geq1$.

We will distinguish two cases below to completes the proof of Theorem \ref{thm:3}.

\vspace{2mm}

{\bf Case~1}. $G-X$ is not connected.

\vspace{2mm}

Since $G-X$ is not connected, $X$ is a vertex cut set of $G$.
Then we have that $|X|\geq2m+1$ by the connectivity of $G$ that $\kappa(G)\geq2m+1$.
Note that after deleting an edge in a graph, the number of its sun components increases by at most 2.
Hence, we have that
\begin{equation}\label{eqn1-3}
sun(G'-X)=sun(G-E'-X)\leq sun(G-X)+2m.
\end{equation}
Combining (\ref{eqn1-3}) with (\ref{eqn1-2}), we have:
$$2|X|+1\leq sun(G'-X)\leq sun(G-X)+2m.$$
It follows that
\begin{eqnarray*}
sun(G-X)
&\geq& 2|X|+1-2m
\\
&\geq& 2\times(2m+1)+1-2m
\\
&=& 2m+3
\\
&\geq& 5.
\end{eqnarray*}
Then by the definition of sun toughness, we obtain that
\begin{eqnarray*}
\frac{m+1}{m+2}
&\leq& s(G)
\\
&\leq& \frac{|X|}{sun(G-X)}
\\
&\leq& \frac{|X|}{2|X|+1-2m}
\\
&=& \frac{1}{2-\frac{2m-1}{|X|}}
\\
&\leq& \frac{1}{2-\frac{2m-1}{2m+1}}
\\
&=& \frac{2m+1}{2m+3},
\end{eqnarray*}
which is a contradiction to that $m\geq1$.

\vspace{3mm}

{\bf Case~2}. $G-X$ is connected.

\vspace{2mm}

It is easily seen that $sun(G-X)\leq\omega(G-X)=1$ since $G-X$ is connected.
Recall that
\begin{equation}\label{eqn1-4}
sun(G'-X)=sun(G-E'-X)\leq sun(G-X)+2m.
\end{equation}
This together with (\ref{eqn1-2}) implies that $2|X|+1\leq sun(G'-X)\leq sun(G-X)+2m\leq 1+2m$, and thus
\begin{equation}\label{eqn1-5}
|X|\leq m.
\end{equation}
It follows from (\ref{eqn1-5}) and $\kappa(G)\geq2m+1$ that
\begin{equation}\label{eqn1-6}
\lambda(G-X)\geq\kappa(G-X)\geq\kappa(G)-|X|\geq2m+1-m=m+1.
\end{equation}
Using (\ref{eqn1-6}), we obtain that $\lambda(G'-X)\geq\lambda(G-X)-m\geq1$.
Hence, $sun(G'-X)\leq\omega(G'-X)=1<2|X|$, which contradicts (\ref{eqn1-2}).
\end{proof}

\begin{remark}
The sun toughness condition $s(G)\geq\frac{4m+1}{4m+4}$ in Theorem \ref{thm:3} cannot be replaced by $s(G)\geq\frac{2m+1}{3m+3}$.
We consider the graph $G=K_{2m+1}\vee(3m+3)K_2$, and choose $E'\subseteq E(3(m+1)K_2)$ with $|E'|=m$.
Then it is easily seen that $s(G)=\frac{2m+1}{3m+3}$ and $\kappa(G)=2m+1$.
Let $G'=G-E'$.
For $X=V(K_{2m+1})\subseteq V(G' )$, we have that $sun(G' -X)=(2m+3)+2m>2(2m+1)=2|X|$.
In view of Theorem \ref{thm:1}, $G'$ has no $P_{\geq 3}$-factor.
Hence, $G$ is not a $(P_{\geq 3},m)$-factor deleted graph.
\end{remark}

\begin{theorem}\label{thm1-5}
Let $m\geq1$ be an integer.
A graph $G$ with $\kappa(G)\geq 2m+1$ is a $(P_{\geq3},m)$-factor deleted graph if $\sigma_3(G)\geq n+2m$.
\end{theorem}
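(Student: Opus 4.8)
The plan is to argue by contradiction, closely following the proofs of Theorems~\ref{thm:3} and~\ref{thm2}. If $G$ is complete the conclusion is immediate from $\lambda(G)\ge\kappa(G)\ge 2m+1$ together with the definition of a $(P_{\ge3},m)$-factor deleted graph, so assume $G$ is not complete; then $n\ge\kappa(G)+2\ge 2m+3$. Fix $E'\subseteq E(G)$ with $|E'|=m$, set $G'=G-E'$, and suppose $G'$ has no $P_{\ge 3}$-factor. By Theorem~\ref{thm:1} and integrality there is $X\subseteq V(G')$ with $sun(G'-X)\ge 2|X|+1$. Since $\lambda(G')\ge\lambda(G)-m\ge m+1\ge 2$, the graph $G'$ is $2$-edge-connected, hence not a sun, so $sun(G')=0$; therefore $X\ne\emptyset$ and $sun(G'-X)\ge 3$.

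The crucial move is to transfer the problem from the edge-deleted graph to the \emph{undeleted} graph $G-X$, where degrees are under control. First, if $|X|\le m$ then $\kappa(G-X)\ge\kappa(G)-|X|\ge m+1$, whence $\lambda(G'-X)\ge\lambda(G-X)-m\ge 1$, so $G'-X$ is connected and $sun(G'-X)\le 1$, contradicting $sun(G'-X)\ge 3$; thus $|X|\ge m+1$. Second, since deleting one edge raises the number of sun components by at most $2$, we get $sun(G-X)\ge sun(G'-X)-2m\ge 2|X|+1-2m\ge 3$.

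Next I would run a degree-sum argument on $G-X$. Write $i=i(G-X)$ for the number of isolated vertices of $G-X$. Since every non-trivial sun component has a vertex of degree $1$ and an isolated vertex is a ($K_1$-)sun, one can always choose $y_1,y_2,y_3$ lying in three distinct sun components of $G-X$ with $d_{G-X}(y_j)\le 1$; being in distinct components of the induced subgraph $G-X$, the set $\{y_1,y_2,y_3\}$ is independent in $G$, so $\sum_j d_G(y_j)\ge\sigma_3(G)\ge n+2m$, while $d_G(y_j)=|N_G(y_j)\cap X|+d_{G-X}(y_j)\le|X|+1$, with $d_G(y_j)\le|X|$ whenever $y_j$ is isolated in $G-X$. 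Choosing as many isolated $y_j$ as $i$ permits yields $|X|\ge\frac{n+2m-3}{3}$ in general, improving to $|X|\ge\frac{n+2m}{3}$ once $i\ge 3$. On the other hand, counting the vertices of $G-X$ component by component (an isolated vertex contributes $1$, every other component at least $2$) gives $n-|X|\ge 2\,sun(G-X)-i\ge 4|X|+2-4m-i$, and also the cruder $n-|X|\ge sun(G-X)\ge 2|X|+1-2m$. Combining the two estimates case by case forces $n\ge n+1$ when $i\ge 3$, and $n\le m+2$, $n\le m+3$, $n\le m+4$ when $i=2,1,0$ respectively; each contradicts $n\ge 2m+3$ (the last case only superficially a borderline one, since $i=0$ together with $sun(G-X)\ge 3$ and $|X|\ge m+1$ already forces $n\ge 8$). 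This completes the contradiction.

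I expect the one genuinely delicate point to be precisely this transfer of the degree-sum hypothesis across the edge deletion: a set independent in $G'$ need not be independent in $G$, so $\sigma_3(G)$ says nothing about $G'$ directly. Routing the argument through $G-X$, at the small cost of the inequality $sun(G-X)\ge sun(G'-X)-2m$, is what makes the independent triples ``honest''; after that the proof reduces to the arithmetic of balancing the degree bound against the vertex count. No structural ingredient beyond Theorem~\ref{thm:1}, the connectivity assumption, and the definition of $\sigma_3$ is needed.
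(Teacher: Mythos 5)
Your proof is correct, and while it follows the paper's overall strategy (contradiction via Theorem \ref{thm:1}, the bound $sun(G-X)\geq sun(G'-X)-2m$ to move from $G'=G-E'$ back to $G-X$, where independence in $G$ is genuinely available, and a degree-sum estimate on an independent triple drawn from distinct sun components), it diverges in two substantive ways. First, you split on $|X|\leq m$ versus $|X|\geq m+1$ rather than on whether $G-X$ is connected; this yields only $|X|\geq m+1$ where the paper's disconnected case gives $|X|\geq\kappa(G)\geq 2m+1$, but $sun(G-X)\geq 2|X|+1-2m\geq 3$ is all you actually need. Second, and more importantly, your execution of the degree-sum step is more careful than the paper's: in its Case~1 the paper asserts $N_{G}(u)\cup N_{G}(v)\cup N_{G}(w)\subseteq X$ for three vertices chosen from sun components of $G-X$, which is only valid when those components are singletons; if the chosen vertices merely have degree $1$ inside their components one only gets $d_G(y_j)\leq|X|+1$, and the paper's chain $n\geq|X|+sun(G-X)\geq 3|X|+1-2m\geq n+1$ degrades to $n\geq n-2$, which is no contradiction. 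Your case analysis on $i(G-X)$, combined with the sharper vertex count $n-|X|\geq 2\,sun(G-X)-i(G-X)$, closes exactly this gap; I checked the arithmetic in all four cases, including the borderline $m=1$, $i=0$ situation, which you correctly dispose of via $n\geq 5|X|+2-4m\geq m+7>m+4$. So your route is not only valid but in effect repairs a sloppy step in the published argument.
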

\begin{proof}
If $G$ is a complete graph, then it is obvious that $G$ is a $(P_{\geq3},m)$-factor deleted graph by $\lambda(G)\geq\kappa(G)\geq2m+1$
and the definition of $(P_{\geq3},m)$-factor deleted graph.
Hence, we may assume that $G$ is a non-complete graph.

For any $E'\subseteq E(G)$ with $|E'|=m$, we write $G'=G-E'$.
To verify the theorem, we only need to prove that $G'$ has a $P_{\geq 3}$-factor.
By contradiction, we assume that $G'$ has no $P_{\geq 3}$-factor.
Then by Theorem \ref{thm:1}, there exists a subset $X\subseteq V(G')$ such that $sun(G'-X)>2|X|$.
In terms of the integrality of $sun(G'-X)$, we obtain that
\begin{equation}\label{eqn3-2}
sun(G'-X)\geq2|X|+1.
\end{equation}

It follows from $\lambda(G)\geq\kappa(G)\geq2m+1$ that $\lambda(G')\geq\lambda(G)-m\geq m+1\geq2$.
Hence, $sun(G')=0$ by the definition of sun component.
Next, we claim that $X\neq\emptyset$.
Otherwise, $X=\emptyset$, and thus $sun(G')=sun(G'-X)\geq2|X|+1=1$ by (\ref{eqn3-2}), which contradicts $sun(G')=0$.
Therefore, we have that $X\neq\emptyset$ and $|X|\geq1$.

Denote by $Sun(G-S)$ the set of sun components of $G-S$.
We will distinguish two cases below to completes the proof of Theorem \ref{thm1-5}.

\vspace{2mm}

{\bf Case~1}. $G-X$ is not connected.

\vspace{2mm}

Since $G-X$ is not connected, $X$ is a vertex cut set of $G$.
Then we have that $|X|\geq2m+1$ by the connectivity of $G$ that $\kappa(G)\geq2m+1$.
Note that after deleting an edge in a graph, the number of its sun components increases by at most 2.
Hence, we have that
\begin{equation}\label{eqn3-3}
sun(G'-X)=sun(G-E'-X)\leq sun(G-X)+2m.
\end{equation}
Combining (\ref{eqn3-3}) with (\ref{eqn3-2}), we have:
$$2|X|+1\leq sun(G'-X)\leq sun(G-X)+2m.$$
It follows that
\begin{eqnarray*}
sun(G-X)
&\geq& 2|X|+1-2m
\\
&\geq& 2\times(2m+1)+1-2m
\\
&=& 2m+3
\\
&\geq& 5.
\end{eqnarray*}
Then we can choose three distinct vertices $u,v,w\in Sun(G-X)$ such that
$\{u,v,w\}$ is an independent set of $G$, and hence
$d_G(u)+d_G(v)+d_G(w)\geq\sigma_3(G)\geq n+2m$.
This together with $N_{G}(u)\cup N_{G}(v)\cup N_{G}(w)\subseteq X$ implies
$$
|X|\geq \max\{d_G(u),d_G(v),d_G(w)\}
\geq \frac{\sigma_3(G)}{3}
\geq \frac{n+2m}{3}.
$$
This together with the inequality $sun(G-X)\geq 2|X|+1-2m$ implies that
$n=|G|\geq|X|+sun(G-X)\geq3|X|+1-2m\geq n+1$, which is a contradiction.

\vspace{3mm}

{\bf Case~2}. $G-X$ is connected.

\vspace{2mm}

It is easily seen that $sun(G-X)\leq\omega(G-X)=1$ since $G-X$ is connected.
Recall that
\begin{equation}\label{eqn3-4}
sun(G'-X)=sun(G-E'-X)\leq sun(G-X)+2m.
\end{equation}
This together with (\ref{eqn3-2}) implies that $2|X|+1\leq sun(G'-X)\leq sun(G-X)+2m\leq 1+2m$, and thus
\begin{equation}\label{eqn3-5}
|X|\leq m.
\end{equation}
It follows from (\ref{eqn3-5}) and $\kappa(G)\geq2m+1$ that
\begin{equation}\label{eqn3-6}
\lambda(G-X)\geq\kappa(G-X)\geq\kappa(G)-|X|\geq2m+1-m=m+1.
\end{equation}
Using (\ref{eqn3-6}), we obtain that $\lambda(G'-X)\geq\lambda(G-X)-m\geq1$.
Hence, $sun(G'-X)\leq\omega(G'-X)=1<2|X|$, which contradicts (\ref{eqn3-2}).
\end{proof}

\section{Open problems}
A graph $G$ is called a $P_{\geq k}$-factor covered graph if it has a $P_{\geq k}$-factor covering $e$ for any $e\in E(G)$, where $k\geq2$ is an integer.
Zhang and Zhou \cite{Zhang2009} proposed the concept of path-factor covered graph, which is a generalization of matching covered graph.
They also obtained a characterization for $P_{\geq 2}$-factor and $P_{\geq 3}$-factor covered graphs, respectively.
Recently, Zhou and Sun \cite{Zhou2020} extended the concept of $P_{\geq k}$-factor covered graph to $(P_{\geq k},l)$-factor critical covered graph, namely, a graph $G$ is called $(P_{\geq k},l)$-factor critical covered if $G-D$ is $P_{\geq k}$-factor covered for any $D\subseteq V(G)$ with $|D|=l$.
Similar to $(P_{\geq k},l)$-factor critical covered graph, the concept of $(P_{\geq k},m)$-factor deleted graph can be further extended to $(P_{\geq k},m)$-factor deleted covered graph, that is, a graph $G$ is a $(P_{\geq k},m)$-factor deleted covered graph
if deleting any $m$ edges from $G$, the resulting graph is still a $P_{\geq k}$-factor covered graph.

We raise the following open problems as the end of our paper.

\begin{pro}
What is the tight $s(G)$ or $\sigma_3(G)$ bound of $(P_{\geq k},l)$-factor critical covered graphs?
\end{pro}
\begin{pro}
What is the tight $s(G)$ or $\sigma_3(G)$ bound of $(P_{\geq k},m)$-factor deleted covered graphs?
\end{pro}

\section*{Acknowledgements}
This work is supported by the National Natural Science Foundation of China (Grant Nos. 11971196,12201304,12201472) and Hainan Provincial Natural Science Foundation of China(No.120QN176).

\section*{Conflict of interests}
The authors declare that there are no conflict of interests.

\end{document}